\title[Independence of ST from BPIT]{The Independence of Stone's theorem from the Boolean Prime Ideal Theorem}
\author{Samuel M. Corson}
\theoremstyle{definition}\newtheorem{theorem}{Theorem}
\theoremstyle{definition}
\theoremstyle{definition}
\theoremstyle{definition}\newtheorem{proposition}[theorem]{Proposition}
\theoremstyle{definition}\newtheorem{definition}[theorem]{Definition}
\theoremstyle{definition}
\theoremstyle{definition}
\theoremstyle{definition}
\theoremstyle{definition}
\theoremstyle{definition}\newtheorem{lemma}[theorem]{Lemma}
\theoremstyle{definition}
\theoremstyle{definition}
\theoremstyle{definition}
\theoremstyle{definition}
\theoremstyle{definition}
\theoremstyle{definition}
\newcommand{\Aut}{\operatorname{Aut}}
\newcommand{\ST}{\textbf{ST}}
\newcommand{\BPIT}{\textbf{BPIT}}
\newcommand{\fix}{\operatorname{fix}}
\newcommand{\stab}{\operatorname{stab}}
\newcommand{\U}{\mathbb{U}}
\newcommand{\Cov}{\operatorname{Cov}}
\newcommand{\Ref}{\operatorname{Ref}}
\newcommand{\Inj}{\operatorname{Inj}}
\begin{document}

\address{Instituto de Ciencias Matem\'aticas CSIC-UAM-UC3M-UCM, 28049 Madrid, Spain.}

\email{sammyc973@gmail.com}
\keywords{paracompact, metacompact, metric space, Boolean Prime Ideal Theorem}
\subjclass[2010]{03E25, 54A35, 54E35, 54D20}
\thanks{This work was supported by ERC grant PCG-336983 and by the Severo Ochoa Programme for Centres of Excellence in R\&D SEV-20150554.}

\begin{abstract}  We give a permutation model in which Stone's Theorem (every metric space is paracompact) is false and the Boolean Prime Ideal Theorem (every ideal in a Boolean algebra extends to a prime ideal) is true.  The erring metric space in our model attains only rational distances and is not metacompact.  Transfer theorems give the comparable independence in the Zermelo-Fraenkel setting, answering a question of Good, Tree and Watson.

%\textbf{Is the existence of such a metric space (rational valued, not metacompact) a boundable statement in the sense of Jech-Sochor?  Or perhaps surjectively or injectively boundable in the sense of David Pincus?  If so, we can transfer this result to a ZF independence result by the theorems of Pincus (see \cite[Theorem 4]{P2} and the introduction of \cite{P3}), and this answers a question of Good, Tree and Watson \cite{GTW}.}

\end{abstract}

\maketitle

\begin{section}{Introduction}

Let $X$ be a topological space.  If $\mathcal{V}$ is an open cover of $X$ we say that open cover $\mathcal{O}$ is a \emph{refinement} of $\mathcal{V}$ if for each $O\in \mathcal{O}$ there exists $V\in \mathcal{V}$ for which $O\subseteq V$.  An open cover $\mathcal{O}$ is \emph{point finite} if for each $x\in X$ the set $\{O\in \mathcal{O}\mid x\in O\}$ is finite and is \emph{locally finite} if for each $x\in X$ there exists an open neighborhood $V$ of $x$ for which $\{O\in \mathcal{O}\mid O \cap V\neq \emptyset\}$ is finite.  We say $X$ is \emph{metacompact} if every open cover has a point finite refinement and is \emph{paracompact} if each open cover has a locally finite refinement.

The Boolean Prime Ideal Theorem, which we denote $\BPIT$, states that every prime ideal in a non-degenerate Boolean algebra extends to a prime ideal.  $\BPIT$ is equivalent to many theorems in mathematics such as the ultrafilter lemma, the existence of a Stone-\v{C}ech compactification for every Tychonov space, and the completeness theorem for first order logic (see \cite[Form 14]{HR}).  $\BPIT$ is also known to be strictly weaker than the axiom of choice \cite{HKRS} and for these reasons $\BPIT$ furnishes a natural benchmark against which to gauge the set theoretic strength of a theorem.

A classical theorem of A. H. Stone, which we denote $\ST$, states that metric spaces are paracompact \cite{St}.  Stone's proof, and subsequence proofs, of $\ST$ use the axiom of choice.  Good, Tree and Watson have shown that $\ST$  is independent of the Zermelo Fraenkel axioms plus the principle of dependent choices.  They ask whether $\ST$ is independent of $\BPIT$ \cite[p. 1216]{GTW}.  We answer this question in the affirmative, initially in the Fraenkel-Mostowski setting (ZFA denotes Zermelo-Fraenkel set theory with atoms).

\begin{theorem}\label{main}  There exists a permutation model $\mathcal{N}$ of ZFA in which

\begin{enumerate}

\item $\BPIT$ holds;

\item $\ST$ fails (there exists a rational-valued metric space which is not metacompact).

\end{enumerate}

\end{theorem}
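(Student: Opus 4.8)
The plan is to realize $\mathcal{N}$ as a Fraenkel--Mostowski model over a set $A$ of atoms carrying a highly homogeneous combinatorial structure, to read the erring metric space off of $A$ in such a way that metacompactness fails for purely symmetry-theoretic reasons, and to secure $\BPIT$ by reducing the existence of prime ideals to a structural Ramsey statement about that structure. The structure on $A$ must be rich and homogeneous enough that a natural rational-valued metric space $X$ can be built from $A$, together with a distinguished open cover of $X$ whose ``peripheral'' members are permuted transitively by the symmetry group; yet it must be tame enough that the associated permutation model still satisfies $\BPIT$. A tree-like structure --- for instance the nodes of a suitable $\omega$-branching rooted tree with $G=\Aut$ acting by permuting children at each node, or a disjoint union of copies of the countable dense linear order together with arbitrary permutations of the copies --- is the natural candidate, these being essentially the structures for which $\BPIT$-preservation is available through Halpern's theorem and the Halpern--L\"auchli theorem. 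I would take $G$ to be the automorphism group of the chosen structure, $\mathcal{F}$ the normal filter on $G$ generated by the pointwise stabilizers $\fix(E)$ of finite $E\subseteq A$, and $\mathcal{N}$ the class of hereditarily symmetric sets; this is a permutation model of ZFA.

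Next I would define $X\in\mathcal{N}$: its points are built from atoms together with finitely many ordinal parameters, and the metric $d\colon X\times X\to\mathbb{Q}$ is invariant under all of $G$, so that $d$ --- and hence $X$ as a metric space --- lies in $\mathcal{N}$ with empty support and attains only rational distances. The space should have a non-isolated point approached by a sequence of $G$-invariant ``shells'', each shell a union of $G$-orbits of atom-configurations, and it should come equipped with a canonical open cover $\mathcal{U}\in\mathcal{N}$, again with empty support, consisting of one ``central'' piece together with a $G$-orbit of ``peripheral'' pieces indexed by the atoms. Checking that $d$ is a metric, that $\mathcal{U}$ is an open cover, and that all distances are rational is routine once the definitions are pinned down.

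I would then prove that $X$ is not metacompact in $\mathcal{N}$ (hence not paracompact, so $\ST$ fails). Suppose toward a contradiction that $\mathcal{O}\in\mathcal{N}$ is a point-finite open refinement of $\mathcal{U}$ with finite support $E$. By the refinement condition together with openness, some member of $\mathcal{O}$ through the non-isolated point must contain all but finitely many of the shells; fixing an atom-configuration $a$ outside $E$ lying in one of those shells, one tracks the images $\pi(O')$ of a suitable peripheral member $O'\in\mathcal{O}$ under permutations $\pi\in\fix(E)$ that fix $a$ but disturb other atoms, and one engineers the geometry of $X$ and $\mathcal{U}$ so that these images are pairwise distinct members of $\mathcal{O}$, each containing $a$. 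This contradicts point-finiteness at $a$. Arranging the shells and peripheral pieces so that this symmetry count genuinely produces infinitely many distinct sets --- in particular so that a peripheral piece near $a$ really ``mentions'' atoms other than $a$ --- is where the construction does its topological work.

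The main obstacle is verifying $\BPIT$ in $\mathcal{N}$. Given a Boolean algebra $B\in\mathcal{N}$ one may, after relativizing to a support, assume $B$ and its operations have empty support, so $G$ acts on $B$ by automorphisms; the goal is a prime ideal of $B$ possessing a finite support. Since each element of $B$ has a finite support and $G$ acts on the finitely-supported elements, the problem reduces to a structural partition property of the structure on $A$ --- the same mechanism underlying the known proofs that $\BPIT$ holds in the ordered Mostowski model and in Halpern--L\"auchli settings: one colours finite approximations to $B$ by their local membership behaviour, extracts a sufficiently homogeneous sub-configuration by the relevant Ramsey or Halpern--L\"auchli theorem, and on it makes a coherent prime-ideal decision which is then seen to be $\fix(E)$-invariant for some finite $E$. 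The delicate point is that the structure on $A$ must simultaneously be homogeneous enough for the non-metacompactness argument and Ramsey enough for this one; reconciling these two demands --- and exhibiting a single structure that does both --- is the crux of the proof.
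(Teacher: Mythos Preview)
Your outline correctly isolates the two tasks --- a symmetry argument for the failure of metacompactness and a Ramsey-type argument for $\BPIT$ --- but you have not found the object that makes both work, and you say so yourself in the last sentence. That is the genuine gap. The paper resolves it by taking the atoms $A$ to be the countable ultrahomogeneous \emph{ordered rational-valued metric space} $\U_{\mathbb{Q}}^{<}$ (the Fra\"iss\'e limit of finite linearly ordered $\mathbb{Q}$-metric spaces). With this choice the erring metric space is $A$ itself with its built-in metric $\delta$ --- no auxiliary ``shells'', ``non-isolated point'', or ordinal parameters are needed --- and the cover is simply $\{N(a,\tfrac12):a\in A\}$. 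The non-metacompactness argument then proceeds by choosing any $a$ far from a putative support $B$, embedding a long finite arithmetic progression $a_0\prec\cdots\prec a_{3K}=a$ of step $1/K$ equidistant from $B$, and shifting along it by an automorphism in $\fix(B)$; this produces arbitrarily many distinct members of the refinement through $a$.

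Your proposed route to $\BPIT$ via Halpern--L\"auchli and tree-like atoms is not the paper's and would leave you with the problem of manufacturing a rational metric space out of a tree or out of disjoint copies of $(\mathbb{Q},<)$, which is exactly the reconciliation you flag as unresolved. The paper instead invokes the Kechris--Pestov--Todor\v{c}evi\'c correspondence together with Ne\v{s}et\v{r}il's theorem that finite ordered $\mathbb{Q}$-metric spaces form a Ramsey class: this makes $\Aut(\U_{\mathbb{Q}}^{<})$ extremely amenable, and a criterion of Blass then gives $\BPIT$ in the associated permutation model directly. So the missing idea is precisely the choice of $\U_{\mathbb{Q}}^{<}$ and the extreme-amenability route to $\BPIT$; once you have those, both halves of the theorem become short.
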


We define $\mathcal{N}$ and check the listed properties in Section \ref{othersection}.  We then argue that the independence  transfers to the ZF setting.  For related results regarding the set theoretic strength of assertions in metric topology the interested reader can consult \cite{HKRS}, \cite{KT}, \cite{Ta}.
\end{section}

\begin{section}{The model $\mathcal{N}$ and its properties}\label{othersection}

The permutation model is constructed using a set of atoms corresponding to a countable universal object (a Fra\"iss\'e limit).  We say a metric space is \emph{$\mathbb{Q}$-valued} if all distances between points are elements in $\mathbb{Q}$.  A triple $(X, d, \prec)$ is a \emph{totally ordered $\mathbb{Q}$-valued metric space} if $(X, d)$ is a $\mathbb{Q}$-valued metric space and $(X, \prec)$ is a totally ordered set.  Let $\U_{\mathbb{Q}}^<$ denote the countable ultrahomogeneous totally ordered $\mathbb{Q}$-valued metric space.  In other words $\U_{\mathbb{Q}}^< = (U, d, <)$ is a totally ordered $\mathbb{Q}$-valued metric space, with $U$ countable, which satisfies the following:

\begin{enumerate}

\item  If $(X, d, \prec)$ is a finite totally ordered $\mathbb{Q}$-valued metric space then there exists an embedding $f: (X, d, \prec) \rightarrow \U_{\mathbb{Q}}^<$.

\item  If finite $X, Y \subseteq U$ are such that there exists an isomorphism $f:(X, d\upharpoonright (X\times X), <\upharpoonright (X\times X)) \rightarrow (Y, d\upharpoonright (X\times X), <\upharpoonright (Y \times Y))$ then there exists an automorphism $\phi: \U_{\mathbb{Q}}^< \rightarrow \U_{\mathbb{Q}}^<$ which extends $f$.
\end{enumerate}

The structure $\U_{\mathbb{Q}}^<$ can be constructed by induction using a back-and-forth argument, from the important fact that the class of $\mathbb{Q}$-valued totally ordered finite metric spaces is a  Fra\"iss\'e class (this is noted in \cite[page 111]{KPT}, for example).  The reader can see \cite[Section 2]{KPT} for a review of  Fra\"iss\'e theory.

We let $\mathcal{M}$ be a model of ZFA + AC with a countably infinite set $A$ of atoms.  Index the set of atoms $A = \{a_x\}_{x\in U}$ and endow $A$ with binary relation $\prec$ as well as binary relations $q$ for each $q\in \mathbb{Q} \cap [0, \infty)$  with $a_{x_0}\prec a_{x_1}$ if and only if $x_0 < x_1$ and $q(a_{x_0}, a_{x_1})$ if and only if $d(x_0, x_1) = q$.

Let $G$ be the group $\Aut(A)$ of all bijections on $A$ which preserve the binary relations above.  For each $B \subseteq A$ we let $\fix(B) = \{\phi \in G\mid (\forall a\in B)\phi(a) = a\}$.  Let $\mathcal{F}$ be the normal filter generated by $\fix(B)$ for each finite $B\subseteq A$.  Let $\mathcal{N}$ be the ZFA submodel of $\mathcal{M}$ of hereditarily $\mathcal{F}$-symmetric sets (those sets $X$ for which each element $Y$ of the transitive closure of $X$ has some finite $B_Y\subseteq A$ for which $\fix(B_Y) \leq \stab(Y):= \{\phi\in G\mid \phi(Y) = Y)\}$).  We check the two assertions of Theorem \ref{main} and then the transferability of the independence result.

\begin{subsection}{$\BPIT$ holds in $\mathcal{N}$}

We utilize a result of Blass \cite{B3} and a recent result due to Ne\v{s}et\v{r}il \cite{N}.  Recall that a Hausdorff topological group $G$ is \emph{extremely amenable} if for every continuous action $G \curvearrowright X$ on a nonempty compact Hausdorff space $X$ there is a universal fixed point (i.e. there is some $x\in X$ for which $g\cdot x = x$ for all $g\in G$).

By using a single direction of each of the equivalences in \cite[Theorem 5.1]{B3} and \cite[Theorem 5.2]{B3} we obtain the following:

\begin{proposition}\label{Blasscombined}  Let $\mathcal{P}$ be a permutation model of ZFA defined by set $A$ of atoms, nontrivial group $G$ of permutations on $A$ and normal filter $\mathcal{F}$ obtained by the ideal of finite sets.  Suppose also that by endowing $G$ with the topology arising from the normal filter $\mathcal{F}$ we have that $G$ is extremely amenable.  Then $\mathcal{P}$ satisfies $\BPIT$.
\end{proposition}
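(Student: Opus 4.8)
The plan is to obtain Proposition~\ref{Blasscombined} by composing one implication from each of Blass's equivalences \cite[Theorem 5.1]{B3} and \cite[Theorem 5.2]{B3}; essentially no new argument is needed beyond matching the present hypotheses to those under which Blass works. Recall the ambient setting of those results: a set $A$ of atoms, a nontrivial group $G$ of permutations of $A$, the normal filter $\mathcal{F}$ generated by $\fix(B)$ for $B\subseteq A$ finite, the associated permutation model, and on $G$ the topology whose basic neighbourhoods of the identity are the subgroups $\fix(B)$ with $B\subseteq A$ finite (equivalently, pointwise-convergence topology for $A$ discrete) --- which is exactly ``the topology arising from the normal filter $\mathcal{F}$'' named in the statement. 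Blass's \cite[Theorem 5.1]{B3} is an equivalence one of whose directions reads: if $G$, topologized as above, is extremely amenable, then a certain combinatorial (partition/Ramsey-type) condition $(\dagger)$ on the action $G\curvearrowright A$ holds; and \cite[Theorem 5.2]{B3} is an equivalence one of whose directions reads: if $(\dagger)$ holds then the permutation model satisfies $\BPIT$.

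First I would quote both theorems in the form matching our hypotheses and, in particular, confirm that the intermediate condition $(\dagger)$ appearing in \cite[Theorem 5.1]{B3} is literally the same statement as (or an immediate reformulation of) the one appearing in \cite[Theorem 5.2]{B3}, so that the two implications chain. The conclusion is then immediate: by hypothesis $\mathcal{P}$ is built from a set $A$ of atoms, a nontrivial permutation group $G$, and the ideal of finite subsets of $A$, and $G$ with the $\mathcal{F}$-topology is extremely amenable; applying the cited direction of \cite[Theorem 5.1]{B3} yields $(\dagger)$, and applying the cited direction of \cite[Theorem 5.2]{B3} then yields $\mathcal{P}\models\BPIT$.

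The only real obstacle is bookkeeping rather than mathematics: one must check that Blass's standing assumptions in the relevant section of \cite{B3} are exactly ours --- a nontrivial permutation group on a set of atoms with the finite-support filter --- that the topology on $G$ used there is the $\mathcal{F}$-topology, and that the intermediate condition transmits faithfully between the two theorems. Using only a single direction of each equivalence is what makes this painless: we never invoke the converse implications, so any supplementary conditions those directions might require are irrelevant. Since \cite[Theorems 5.1 and 5.2]{B3} are formulated precisely for permutation models of this shape, the verification is routine and the proposition follows.
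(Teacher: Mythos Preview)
Your proposal is correct and follows exactly the paper's approach: the proposition is obtained by chaining one direction of \cite[Theorem 5.1]{B3} with one direction of \cite[Theorem 5.2]{B3}, and the paper offers no argument beyond pointing to this combination. Your additional remarks about matching the standing hypotheses and the intermediate condition are reasonable bookkeeping but go slightly beyond what the paper itself spells out.
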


 We note the following which is due to combining the well known Ramsey result of Ne\v{s}et\v{r}il (see \cite[Section 5, Remark 2]{N}) with \cite[Theorem 4.7]{KPT}, see \cite[page 111]{KPT} or the introduction of \cite{Th}:

\begin{proposition}  The group $G \simeq \Aut(U_{\mathbb{Q}}^<)$ is extremely amenable.
\end{proposition}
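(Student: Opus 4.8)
The plan is to derive extreme amenability of $G\simeq\Aut(\U_{\mathbb{Q}}^<)$ from the Kechris--Pestov--Todorcevic correspondence. By \cite[Theorem 4.7]{KPT}, if $\mathbf{F}$ is a Fra\"iss\'e structure then its automorphism group, equipped with the topology of pointwise convergence (which here is exactly the topology arising from the normal filter $\mathcal{F}$, since the $\fix(B)$ for finite $B\subseteq A$ form a neighborhood basis of the identity), is extremely amenable if and only if the age of $\mathbf{F}$ is a Fra\"iss\'e class all of whose members are rigid and which has the Ramsey property. So it suffices to check these two conditions for the class $\mathcal{K}$ of finite totally ordered $\mathbb{Q}$-valued metric spaces, whose Fra\"iss\'e limit is $\U_{\mathbb{Q}}^<$.

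First I would dispose of rigidity: any automorphism of a finite totally ordered $\mathbb{Q}$-valued metric space $(X,d,\prec)$ is in particular an order isomorphism of the finite linear order $(X,\prec)$ onto itself, hence the identity, so every member of $\mathcal{K}$ is rigid. (This is the familiar point that passing to ordered expansions automatically rigidifies the structures.) That $\mathcal{K}$ is a Fra\"iss\'e class has already been recalled above, following \cite[page 111]{KPT}.

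The substantive step is the Ramsey property: for all $\mathbf{A},\mathbf{B}\in\mathcal{K}$ with $\mathbf{A}\leq\mathbf{B}$ and every $k\geq 2$ there should exist $\mathbf{C}\in\mathcal{K}$ such that for any $k$-coloring of the set $\binom{\mathbf{C}}{\mathbf{A}}$ of copies of $\mathbf{A}$ in $\mathbf{C}$ there is a copy $\mathbf{B}'$ of $\mathbf{B}$ in $\mathbf{C}$ with $\binom{\mathbf{B}'}{\mathbf{A}}$ monochromatic. This is precisely Ne\v{s}et\v{r}il's Ramsey theorem for finite ordered metric spaces; I would simply quote it, citing \cite[Section 5, Remark 2]{N} (with the packaging for the present purpose recorded in \cite[page 111]{KPT} and the introduction of \cite{Th}).

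Combining the two verifications with \cite[Theorem 4.7]{KPT} gives that $\Aut(\U_{\mathbb{Q}}^<)$ is extremely amenable, and since the relations $\prec$ and $q$ (for $q\in\mathbb{Q}\cap[0,\infty)$) on $A=\{a_x\}_{x\in U}$ were defined precisely so that $G$ is isomorphic as a topological group to $\Aut(\U_{\mathbb{Q}}^<)$ via the indexing $x\mapsto a_x$, the claim follows. The only genuine obstacle is the Ramsey property, which is Ne\v{s}et\v{r}il's theorem, so in practice the proof is the assembly of two cited results exactly as the statement's preamble indicates; the rest is bookkeeping.
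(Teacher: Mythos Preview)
Your proposal is correct and follows exactly the route the paper takes: the paper simply records that the proposition follows by combining Ne\v{s}et\v{r}il's Ramsey theorem for finite ordered metric spaces \cite[Section 5, Remark 2]{N} with the KPT correspondence \cite[Theorem 4.7]{KPT}, and you have spelled out that combination (including the easy rigidity check) in full.
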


\noindent That $\BPIT$ holds in $\mathcal{N}$ is now immediate.

\end{subsection}

\begin{subsection}{$\ST$ fails in $\mathcal{N}$}

We endow the set of atoms $A$ with the metric $\delta: A \times A \rightarrow \mathbb{Q}$ given by $\delta(a_0, a_1) = q$ if and only if $q(a_0, a_1)$.  Clearly this metric is hereditarily $\mathcal{F}$-symmetric and so the metric space $(A, \delta)$ is an element of $\mathcal{N}$.  Let $\mathcal{V} =  \{N(a, \frac{1}{2})\}_{a\in U}$ be the collection of open balls of radius $\frac{1}{2}$, which is also obviously in $\mathcal{N}$.

Suppose for contradiction that there exists a point finite refinement $\mathcal{O}$ of $\mathcal{V}$.  Let $B\subseteq A$ be such that $\fix(B) \leq \stab(\mathcal{O})$.  We can assume without loss of generality that $B$ is nonempty.  Let $D$ denote the diameter of the set $B$.  Select an element $a\in A \setminus B$ such that $\delta(a, a') =  D + 4$ and $a' \prec a$ for every $a'\in B$.

Let $n\in \omega \setminus \{0\}$ be given.  Since $\mathcal{O}$ is a cover of $(A, \delta)$ we select $O\in \mathcal{O}$ such that $a\in O$.  As $O$ is open we can select $m\in \omega$ for which $N(a, \frac{1}{m}) \subseteq O$.  Let $K = nm$.  Select $a_0, \ldots a_{3K} = a$ in $A$ be such that

\begin{itemize}

\item $a' \prec a_0 \prec a_1 \prec \cdots \prec a_{3K}$ for all $a'\in B$;

\item $\delta(a_i, a') = D +4$ for each $a'\in B$ and $0\leq i\leq 3K$; and

\item $\delta(a_i, a_j) = \frac{|i - j|}{K}$ for $0\leq i, j \leq 3K$.

\end{itemize}

\noindent Notice that $a_{3K - n + 1}, a_{3K - n + 2}, \ldots a_{3K} \in N(a_{3K}, \frac{1}{m}) \subseteq O$.  Let $\phi \in \fix(B)$ be such that $\phi(a_i) = a_{i+1}$ for every $0 \leq i < 3K$.  Now $a_{3K} = a \in \phi^j(O)$ for each $0 \leq j < n$.

If $J \subseteq U$ is any set of diameter at most $1$ with $a\in J$ we also know that $$\{a_0, \ldots, a_{3K}\}\cap J \subseteq \{a_{2K}, a_{2K + 1}, \ldots, a_{3K}\}.$$  Letting $0\leq L \leq 3K$ be minimal such that $a_L\in O$ we see that $2K \leq L \leq 3K - n +1$.  Moreover for every $0 \leq j < n$ we have

\begin{itemize}

\item $a_{L+ j}\in \phi^j(O)$; and

\item $a_i\notin \phi^j(O)$ for $K \leq i < L+ j$
\end{itemize}

\noindent where the latter assertion holds by noticing that if $a_i \in \phi^j(O)$ and $K \leq i <L + j$ then $\phi^{-j}(a_i) = a_{i-j} \in O$, but $0 \leq K - n < K - j \leq i - j < L$ and this contradicts the minimality of $L$.  Thus for $0\leq j < n$ we have $$\{a_K, \ldots a_{L+j}\} \cap \phi^j(O) = \{a_{L + j}\}$$ which demonstrates that the function $f = \{(j, \phi^j(O)\}_{0\leq j <n}$ is injective and  $a \in \phi^j(O) \in \mathcal{O}$ for each $0\leq j < n$.  That this function is in $\mathcal{N}$ is easily checked.

Thus we have demonstrated that in $\mathcal{N}$ for every $n\in \omega \setminus \{0\}$ there exists an injection $f: n \rightarrow \{O\in \mathcal{O}\mid a\in O\}$ and thus $A$ is a $\mathbb{Q}$-valued metric space which has an open cover which has no refinement which is point finite.
\end{subsection}

\begin{subsection}{Transfer to ZF}

We note that the negation of Stone's Theorem has already been recognized to be transferrable (see \cite[page 387, third line from bottom]{HR}).  We provide an argument for the sake of completeness.  For more background in transfer principles see \cite[Note 103]{HR}.

If $\mathcal{P}$ is a model of ZFA and $Y$ is a set in $\mathcal{P}$ we define

\begin{center}  $R_0(Y) = Y$

$R_{\alpha + 1}(Y) = P(R_{\alpha}(Y))\cup R_{\alpha}(Y)$

$R_{\alpha}(Y) = \bigcup_{\beta < \alpha} R_{\alpha}(Y)$ for $\alpha$ a non-zero limit ordinal.

\end{center}

\noindent where $P(X)$ denotes the powerset of $X$.

We specialize a definition from \cite[page 722]{P3} as is done in \cite[Definition 2.14]{Kl}.

\begin{definition}  Let $\overline{x} = (x_0, \ldots, x_{n-1})$ be a tuple of variables.  We'll say a formula $\Phi(\overline{x})$ is \emph{ordinal boundable} if for some absolutely definable ordinal $\alpha$ we have that

\begin{center} $\Phi(\overline{x}) \Longleftrightarrow \Phi^{R_{\alpha}(\bigcup \overline{x})}(\overline{x})$
\end{center}

\noindent is a theorem of ZFA.  A statement is \emph{ordinal boundable} if it is the existential closure of an ordinal boundable formula.
\end{definition}

Readers who are familiar with boundable statements \cite{HR} will quickly notice that ordinal boundable statements are boundable.

\begin{lemma}  The statement ``There exists a $\mathbb{Q}$-valued metric space having an open cover with no point finite refinement'' is ordinal boundable.
\end{lemma}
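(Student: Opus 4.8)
The plan is to write the statement as $\exists X\,\exists d\,\exists\mathcal{V}\ \Phi(X,d,\mathcal{V})$, where $\Phi(X,d,\mathcal{V})$ is the conjunction of the clauses: (i) $d$ is a function $X\times X\to\mathbb{Q}$ obeying the metric axioms; (ii) $\mathcal{V}$ is a family of subsets of $X$, each open in the metric topology induced by $d$, with $\bigcup\mathcal{V}=X$; and (iii) for every $\mathcal{O}$, if $\mathcal{O}$ is an open cover of $X$ refining $\mathcal{V}$ then there is $x\in X$ for which $\{O\in\mathcal{O}\mid x\in O\}$ is infinite. The displayed statement is visibly the existential closure of $\Phi$ (with $\mathbb{Q}$ referred to through its absolute definition from $\omega$), so it suffices to show $\Phi$ is ordinal boundable, and the claim is that this holds with $\alpha=\omega+k$ for a suitable fixed finite $k$; note $\omega+k$ is absolutely definable.

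The key step is the observation that every object $\Phi$ quantifies over sits at a uniformly bounded level of the cumulative hierarchy over $Y:=\bigcup\{X,d,\mathcal{V}\}$. Since $X\subseteq Y=R_0(Y)$ we get $X,\,X\times X\in R_{k_1}(Y)$, $P(X)\subseteq R_1(Y)$ and $P(P(X))\subseteq R_2(Y)$ for a small fixed $k_1$; in particular every open subset of $X$ lies in $R_1(Y)$, and every family of open subsets of $X$ --- hence $\mathcal{V}$, every open cover, and every candidate refinement $\mathcal{O}$ --- lies in $R_2(Y)$. Also $\emptyset\in R_1(Y)$, so an easy induction gives $V_\omega\subseteq R_\omega(Y)$; consequently $\omega\in R_{\omega+1}(Y)$, and since $\mathbb{Z}$ and $\mathbb{Q}$ (together with their order and arithmetic) arise from $\omega$ by absolute recursions they lie in $R_{\omega+k_2}(Y)$ for a fixed finite $k_2$, as does every function from a natural number to a family of open subsets of $X$ (such a function being a subset of $\omega\times R_1(Y)$). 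Fixing $k$ large enough that $\alpha=\omega+k$ exceeds $k_1$, $k_2$ and the remaining finite offsets, $R_\alpha(Y)$ then contains $X$, $d$, $\mathcal{V}$, $X\times X$, $\mathbb{Q}$, $\omega$, $P(X)$, $P(P(X))$ and all the function sets needed to express finiteness, together with every element of each of these.

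It remains to check $\Phi(X,d,\mathcal{V})\Longleftrightarrow\Phi^{R_\alpha(Y)}(X,d,\mathcal{V})$ in ZFA, which is routine once these levels are in place: each clause of $\Phi$ is built from $\in$, $=$ and the absolute order and arithmetic of $\mathbb{Q}$, together with quantifiers bounded by one of the sets just listed, and $R_\alpha(Y)$ contains each such bounding set and all of its elements, so relativizing to $R_\alpha(Y)$ neither loses a genuine witness nor admits a spurious one. The clause deserving a word of care --- and the main, if mild, obstacle --- is (iii), whose leading quantifier $\forall\mathcal{O}$ is a priori unbounded: but if $\mathcal{O}$ is an open cover of $X$ then its members are open subsets of $X$, so $\mathcal{O}\subseteq P(X)\subseteq R_1(Y)$ and hence $\mathcal{O}\in R_2(Y)\subseteq R_\alpha(Y)$, so restricting the quantifier to $R_\alpha(Y)$ discards no candidate point finite refinement; moreover ``is an open cover'', ``refines $\mathcal{V}$'' and ``is point finite'' each relativize faithfully for such $\mathcal{O}$ (the last because every bijection witnessing the finiteness of a set $\{O\in\mathcal{O}\mid x\in O\}$ already lies in $R_\alpha(Y)$). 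The one thing that must genuinely be watched is precisely this bookkeeping --- that ``open'', ``refinement'' and ``finite'' are computed inside $R_\alpha(Y)$ exactly as in $V$ --- which is also what pins down the value of $k$.
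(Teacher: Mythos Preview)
Your proposal is correct and follows essentially the same approach as the paper: express the statement as the existential closure of a formula $\Phi(X,d,\mathcal{V})$, observe that every object quantified over in $\Phi$ lives at a uniformly bounded level $R_{\omega+k}(X\cup d\cup\mathcal{V})$, and conclude the relativization equivalence. The only differences are cosmetic --- the paper computes an explicit bound ($\alpha=\omega+41$) and writes out $\Phi$ symbolically, whereas you leave $k$ abstract and give a more detailed justification of why the relativization goes through (which the paper simply asserts).
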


\begin{proof}  We give an ordinal bound using a succession of very naive bounds.  Certainly if $X \subseteq Y$ we have $R_{\alpha}(X) \subseteq R_{\alpha}(Y)$.  We have $\omega \in R_{\omega + 1}(\emptyset)$ and by constructing $(\omega, +)$, then $\mathbb{Z}$, then $(\mathbb{Z}, +)$, then $\mathbb{Q}$ and finally $(\mathbb{Q}, +)$ in the standard way we see that for example $(\mathbb{Q}, +)\in R_{\omega + 30}(\emptyset)$.

Given a set $X$ we have $X\times X \in R_{2}(X)$ and so $X\times X, (\mathbb{Q}, +)\in R_{\omega + 30}(X)$.  A function $d: X\times X \rightarrow \mathbb{Q}$ will satisfy $d\in R_{\omega + 33}(X)$.  A collection $\mathcal{V}$ of subsets of $X$ (for example an open cover) will satisfy $\mathcal{V} \in R_2(X)$.  Thus an ordered triple $(X, d, \mathcal{V})$ with $d$ a $\mathbb{Q}$-valued metric on $X$ and $\mathcal{V}$ being an open cover of $X$ under the topology induced by $d$ will satisfy, say, $(X, d, \mathcal{V}) \in R_{\omega + 37}(X)$.  A function $f$ from a natural number to an open cover of $X$ will satisfy $f\in R_{\omega + 41}(X)$.

Let $\Cov(X, d, \mathcal{U})$ denote that $d$ is a metric on $X$ and that $\mathcal{U}$ is an open cover of $X$ with respect to the metric $d$.  Let $\Ref(X, d, \mathcal{U}, \mathcal{V})$ denote that $\Cov(X, d, \mathcal{U})$ and $\Cov(X, d, \mathcal{V})$ and that $\mathcal{V}$ is a refinement of $\mathcal{U}$.  Let $\Inj(f, Y, Z)$ denote that $f$ is an injective function from $Y$ to $Z$.

Let $\Phi(X, d, \mathcal{U})$ signify

\begin{center}  $\Cov(X, d, \mathcal{U}) \wedge$

$(\forall \mathcal{V}\in P(P(X)))[\Ref(X, d, \mathcal{U}, \mathcal{V}) \rightarrow (\exists x\in X)(\forall n\in \omega)(\exists f \subseteq n\times \mathcal{V})[\Inj(f, n, \mathcal{V}) \wedge (\forall m< n)x\in f(m)]]$

\end{center}

\noindent The existential closure of $\Phi(X, d, \mathcal{U})$ is the statement in question and ZFA implies that

\begin{center}  $\Phi(X, d, \mathcal{U}) \Longleftrightarrow \Phi^{R_{\omega + 41}(X \cup d \cup \mathcal{U})}(X, d, \mathcal{U})$

\end{center}

\noindent which completes the proof.

\end{proof}

The independence of $\ST$ from $\BPIT$ in the Zermelo-Fraenkel setting now follows from the following consequence of a result of Pincus (see \cite{P2, P3} or \cite[page 286]{HR}).

\begin{proposition}\label{transfer}  Suppose $\Psi$ is the conjunction of an ordinal boundable statement and $\BPIT$.  If there is a Fraenkel-Mostowski model of $\Psi$ then there is a Zermelo-Fraenkel model of $\Psi$.
\end{proposition}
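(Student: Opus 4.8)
The plan is to reduce Proposition \ref{transfer} to Pincus's general transfer theorem for boundable statements conjoined with \BPIT. First I would recall the precise form of Pincus's result: there is a list of ``injectively boundable'' (equivalently, under the relevant hypotheses, boundable) statements together with a short list of special statements — among them the Boolean Prime Ideal Theorem — such that if $\Psi$ is a conjunction of a boundable statement with members of that special list, and $\Psi$ holds in some Fraenkel--Mostowski (permutation) model, then $\Psi$ holds in some model of ZF. Thus the proof is essentially a citation plus a bridge: I must verify that an ordinal boundable statement is of the form to which Pincus's theorem applies, and that \BPIT\ is one of the admissible conjuncts. The latter is explicitly recorded in \cite[page 286]{HR}; the former is the content of the remark already made in the excerpt, that ordinal boundable statements are boundable in the sense of \cite{HR}.

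The key steps, in order, are: (1) observe that by the remark following the definition, any ordinal boundable statement $\Phi$ is boundable, so in particular it is injectively boundable (every boundable statement is injectively boundable — this is immediate from the definitions in \cite{P3}, or \cite[Note 103]{HR}); (2) invoke the Pincus transfer theorem in the form: if $\Phi$ is injectively boundable and $\Theta$ is a conjunction of statements from the distinguished list that includes \BPIT, and $\Phi \wedge \Theta$ holds in an FM model, then $\Phi \wedge \Theta$ holds in a ZF model; (3) specialize $\Theta$ to \BPIT\ itself and $\Phi$ to the given ordinal boundable statement, and conclude. I would phrase the write-up so that the only genuine verification left to the reader is the elementary fact that ``ordinal boundable $\Rightarrow$ boundable $\Rightarrow$ injectively boundable,'' which is where the absolutely definable ordinal bound in the definition does its work: it guarantees the relativization used in Pincus's machinery is to an initial segment $R_\alpha$ with $\alpha$ absolute, exactly the hypothesis his theorem requires.

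The main obstacle — really the only one — is bibliographic precision: Pincus's papers \cite{P2, P3} state the transfer principle in a form (injectively boundable statements, with a menu of supplementary conjuncts) that is more general and more technical than what is needed here, and \BPIT\ appears on the supplementary list under the name of the ``prime ideal theorem'' or via its equivalent ``every filter extends to an ultrafilter.'' I would therefore be careful to cite the exact location (\cite[page 286]{HR} collects this cleanly, listing \BPIT\ among the forms that transfer alongside boundable statements) rather than reconstruct the proof, since reconstructing Pincus's forcing-over-symmetric-extensions argument is far outside the scope of this paper. No new mathematics is needed: the proposition is a packaging of known transfer technology, and the proof is two or three sentences citing Pincus and the remark about ordinal boundability.

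\begin{proof}  By the remark immediately following the definition of ordinal boundable, any ordinal boundable statement is boundable in the sense of \cite[Note 103]{HR}, hence injectively boundable.  The statement \BPIT\ is among the statements which, by a theorem of Pincus, may be conjoined with an injectively boundable statement while preserving transferability from Fraenkel--Mostowski models to Zermelo--Fraenkel models (see \cite{P2, P3}, or the tabulation in \cite[page 286]{HR}).  Thus if $\Psi$ is the conjunction of an ordinal boundable statement with \BPIT\ and $\Psi$ holds in a Fraenkel--Mostowski model, then $\Psi$ holds in a model of ZF.
\end{proof}
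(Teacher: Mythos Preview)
Your proposal is correct and matches the paper's treatment: the paper does not give a proof of this proposition at all but simply records it as a consequence of Pincus's transfer theorem with citations to \cite{P2, P3} and \cite[page 286]{HR}. Your short argument merely unpacks that citation---ordinal boundable $\Rightarrow$ boundable $\Rightarrow$ injectively boundable, and \BPIT\ is on Pincus's list of admissible conjuncts---which is exactly the intended route.
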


\end{subsection}

\end{section}

\end{document}